\documentclass[12pt]{article}

\usepackage{amsthm,amsmath}
\RequirePackage[numbers]{natbib}
\RequirePackage[colorlinks,citecolor=blue,urlcolor=blue]{hyperref}
\usepackage{amsfonts}
\usepackage{graphicx}
\usepackage{amsthm}
\usepackage{mathtools}
\usepackage{amsmath}
\usepackage{caption}
\usepackage{subcaption}
\usepackage{pgfplotstable}
\usepackage{blindtext}
\usepackage{float}
\usepackage[utf8]{inputenc}
\usepackage{booktabs}
\usepackage{amssymb}
\usepackage{fullpage}


\DeclareMathOperator{\hsic}{HSIC}
\DeclareMathOperator{\mmd}{MMD}

\DeclareMathOperator{\id}{id}

\newcommand\independent{\protect\mathpalette{\protect\independenT}{\perp}}
\def\independenT#1#2{\mathrel{\rlap{$#1#2$}\mkern2mu{#1#2}}}

\newtheorem{theorem}{Theorem}
\newtheorem{definition}{Definition}
\newtheorem{setting}{Setting}

\begin{document}


\title{Consistency of permutation tests for HSIC and dHSIC}
\author{David Rindt, Dino Sejdinovic and David Steinsaltz}
\date{%
    Department of Statistics,\\
     University of Oxford}
    
\maketitle



\begin{center}
\section*{Abstract}
\end{center}
The Hilbert--Schmidt Independence Criterion (HSIC) is a popular measure of the dependency between two random variables. The statistic dHSIC is an extension of HSIC that can be used to test joint independence of $d$ random variables. Such hypothesis testing for (joint) independence is often done using a permutation test, which compares the observed data with randomly permuted datasets. The main contribution of this work is proving that the power of such independence tests converges to 1 as the sample size converges to infinity. This answers a question that was asked in \cite{pfister2018kernel}. Additionally this work proves correct type 1 error rate of HSIC and dHSIC
permutation tests and provides guidance on how to select the number of
permutations one uses in practice. While correct type 1 error rate was already proved in \cite{pfister2018kernel}, we provide a modified proof following \cite{berrettinformation}, which extends to the case of non-continuous data. The number of permutations to use was studied e.g. by \cite{marozzi2004some} but not in the context of HSIC and with a slight difference in the estimate of the $p$-value and for permutations rather than vectors of permutations. While the last two points have limited novelty we include these to give a complete overview of permutation testing in the context of HSIC and dHSIC.

\section{Introduction} 

In \cite{kernelteststatisticalindependence} and \cite{kerneltwosampletest} kernel methods were proposed for independence testing and two-sample testing. Since then kernel based tests have been proposed for conditional independence testing and joint independence testing \cite{zhang2012kernel},\cite{pfister2018kernel}. Such tests have been used in graphical modeling, among other applications. Independence testing using reproducing kernel Hilbert spaces has also been extended to right-censored data found in survival analysis \cite{rindt2019nonparametric,fernandez2019kernel}. We study the tests for joint independence proposed by \cite{pfister2018kernel} which includes the independence test between two random variables.\\

These methods have several desirable properties. For appropriate choices of kernel, the population value of the test statistic, called the Hilbert--Schmidt Independence Criterion (HSIC), equals zero if and only if the two variables are independent. Similarly, the population value of the statistic measuring joint independence --- the $d$-variable HSIC, or {\em dHSIC} --- is zero if and only if 
the variables are indeed jointly independent. One thus does not need to make assumptions about the form of the relationship among 
the variables. Furthermore, under mild conditions the test statistic converges in probability to the population value. Additionally, these tests may be applied to multidimensional random variables, and even to variables 
that do not take values in the Euclidean domains, such as graphs or text \cite{kernelteststatisticalindependence}.\\

In practice, one does not have access to the true sampling distribution. To perform hypothesis testing one thus needs to approximate the null distribution or perform permutation tests or bootstrapping. These three methods were studied for dHSIC in \cite{pfister2018kernel} by Pfister, B{\"u}hlmann, Sch{\"o}lkopf, and Peters, where they established consistency of the bootstrap test (power converging to 1 for every alternative hypothesis), correct type 1 error rate of the permutation test, and pointwise asymptotic correct type 1 error rate of the bootstrap procedure. \\

One question that remained unanswered was the consistency of the permutation test. See Table 1 of \cite{pfister2018kernel} and Section 3.2.1 and Remark 2 where they propose a proof strategy. The main theoretical contribution of this work is to prove the consistency of the permutation test, albeit not in the proposed way, but using more elementary techniques that can be traced back at least to \cite{hoeffding1952large}: as we discuss in Section \ref{sec:introduction} the test statistic dHSIC, with appropriate choice of kernel, converges to a positive constant for each fixed alternative hypothesis. The main observation from which consistency will follow 
is that it suffices for the statistic's distribution under random permutation of the data to converge to zero in probability (Theorem \ref{thm:permutedhsic}). The full proof of consistency may be found in Section \ref{sec:consistency}. \\

We also present short proofs the permutation test has correct type 1 error rate (Section \ref{sec:level}) and investigate the question of how many permutations are appropriate to use (Section \ref{sec:howmany}). These last investigations are not new, and can be found 
elsewhere in the literature, e.g. \cite{pfister2018kernel,marozzi2004some,berrettinformation}, as well as older literature, such as \cite{hoeffding1952large}. We review these ideas here for completeness, and because we wish to give a more unified treatment. 
In particular, \cite{marozzi2004some} studied the number of permutations, but differs from our notation in considering
individual permutations rather than vectors of permutations, and their $p$-value estimate lacked a guarantee for the type 1 error rate of the test. In \cite{pfister2018kernel} correct type 1 error rate of the test was proved, but under the additional assumption that the random variables had a density. For completeness we also show here two correct ways of dealing with non-continuous data. Furthermore, we provide a different proof, following \cite{berrettinformation}, which appeared in the context of independence-testing using mutual information.


\section{Background}\label{sec:introduction}

\subsection{Reproducing Kernel Hilbert Spaces}
This section reviews some relevant information about reproducing kernel Hilbert spaces (RKHSs).

\begin{definition}{ (Reproducing Kernel Hilbert Space)}(\cite{learningwithkernels}) Let $\mathcal X$ be a non-empty set and $H$ a Hilbert space of functions $f:\mathcal X \to \mathbb R$. Then $H$ is called a reproducing kernel Hilbert (RKHS) space endowed with dot product $\langle \cdot,\cdot \rangle $ if there exists a function $k: \mathcal X \times \mathcal X \to \mathbb R$ with the following properties.
\begin{enumerate}
\item $k$ has the reproducing property 
\begin{equation}
\langle f,k(x,\cdot)\rangle=f(x) \ \text{for all} \ f \in H, x \in \mathcal X.
\end{equation}
\item $k$ spans $H$, that is, $H= \overline{\mathrm{span}\{k(x,\cdot) \ \vert x \in \mathcal X \}}$ where the bar denotes the completion of the space.
\end{enumerate}  \end{definition}
Let $\mathcal X$ be a measurable space and $H_{k}$ be an RKHS on $\mathcal X$ with kernel $k$. Let $\mathbb P$ be a probability measure on $\mathcal X$.  If $\mathbb E_{\mathbb P}\sqrt{k(X,X)}<\infty$, then there exists an element $\mu_{\mathbb P}\in H_{\mathcal X}$ such that $\mathbb E_{\mathbb P}f(X)=\langle f,\mu_{\mathbb P} \rangle$ for all $f\in H_{\mathcal X}$ (\cite{kerneltwosampletest}), where we use the notation $\mathbb E_{\mathbb P} f(X)\coloneqq \int f(x) \mathbb{P}(dx)$. The element $\mu_{\mathbb P}$ is called the mean embedding of $\mathbb P$ in $H_{k}$. Given a sample $\{x_i\}_{i=1}^n$ and the corresponding empirical distribution,  $\frac1n \sum_{i=1}^n \delta_{x_i}$, the corresponding mean embedding is given by $
\frac 1n \sum_{i=1}^n k(x_i,\cdot).$  Given a second distribution $\mathbb Q$ on $\mathcal X$, of which a mean embedding exists, we can measure the dissimilarity of $\mathbb P$ and $\mathbb Q$ by the distance between their mean embeddings in $H_{\mathcal X}$. That is,
\begin{align*}
\mmd(\mathbb P,\mathbb Q)\coloneqq \vert \vert \mu_{ \mathbb P} - \mu_{ \mathbb Q} \vert \vert _{H_k}.
\end{align*}
This is also called the Maximum Mean Discrepancy ($\mmd$). The name comes from the following equality \cite{kerneltwosampletest}, 
\begin{align*}
\vert \vert \mu_{ \mathbb P} - \mu_{ \mathbb Q} \vert \vert _{H_k} = \sup_{f \in H_k} \mathbb E_{ \mathbb P}  f(X) - \mathbb E_{ \mathbb Q}f(X),
\end{align*}
showing that MMD is an integral probability metric. The kernel $k$ is said to be characteristic when $\mmd(\mathbb P,\mathbb Q)=0$ if and only if $\mathbb P=\mathbb Q$. Lastly, for a locally compact Hausdorff space $\mathcal X$, the kernel $k$ is said to be $c_0$-universal if it is continuous and $H_k$ is dense in  $C_0({\mathcal X})$, the set of continuous bounded functions, with respect to the infinity (also called uniform) norm \cite{sriperumbudur2011universality}. The most commonly used example of a kernel that is both characteristic and $c_0$-universal is the Gaussian kernel $k_{\sigma}(x,y)=\exp(- \vert \vert x-y \vert \vert^2/\sigma^2)$ on $\mathbb R^d$.

\subsection{dHSIC}

In \cite{pfister2018kernel} Pfister, B{\"u}hlmann, Sch{\"o}lkopf, and Peters propose a kernel based test for joint independence. Consider the following setting.\\

\begin{setting} For $i=1,\dots,d$, let $\mathcal X^i$ be a locally compact metric space equipped with the Borel sigma-algebra. Let $\mathcal X=\mathcal X^1\times \cdots \times \mathcal X^d$ be equipped with the product sigma-algebra. Let $X^i:\Omega\to\mathcal X^i$ be a random variables on the shared probability space $(\Omega,\mathbb P,\mathcal F)$. In this section the superscript on
$X^j$ always indexes $X$, and never denotes a power of the variable $X$. Let $k^i(\cdot,\cdot):\mathcal X^i \times \mathcal X^i\to \mathbb R$ be a $c_0$-universal kernel on $\mathcal X^i$. Finally let $k\coloneqq k^1 \otimes \cdots \otimes k^d$ be the tensor product of the $d$ kernels. By \cite{szabo2017characteristic}, $k$ is characteristic and $c_0$-universal on $\mathcal X$. We let $\mathcal H_{k},\mathcal H_{k^i}$ be the corresponding RKHSs. \end{setting}

By definition $(X^1,...,X^d)$ are said to be jointly independent if 
$\mathbb P_{X^1,...,X^d}=\mathbb P_{X^1}\times \cdots \mathbb \times \mathbb P_{X^d}$. The main topic of this work is the hypothesis test where $H_0:\mathbb P_{X^1,...,X^d}=\mathbb P_{X^1}\times \cdots \mathbb \times \mathbb P_{X^d}$. With this in mind, we define dHSIC,
\begin{definition}(dHSIC \cite{pfister2018kernel}) Assume Setting 1. Then dHSIC is defined as
\begin{align*}
\text{dHSIC}(X^1,...,X^d)\coloneqq \lvert \lvert \mu_{\mathbb P_{X^1,...,X^d}} - \mu_{\mathbb P_{X^1}\times \cdots \mathbb \times \mathbb P_{X^d}} \rvert \rvert_{\mathcal H_k} ^2.
\end{align*}
\end{definition}
\noindent
Note that, because $k$ is characteristic, $\text{dHSIC}(X^1,\dots,X^d)= 0$ if and only if $\mathbb P_{X^1,...,X^d}=\mathbb P_{X^1}\times \cdots \mathbb \times \mathbb P_{X^d}$. 

As we typically do not have access to the full distribution $\mathbb P_X$, but only to a sample $D\coloneqq (x_i)_{i=1}^n\in \mathcal X^n$, we study the estimator \cite{pfister2018kernel}
\begin{align*}
\widehat{ \text{dHSIC}}(x_1,\dots,x_n) \coloneqq & \frac{1}{n^2} \sum_{M_2(n)} \prod_{j=1}^dk^j(x_{i_1}^j,x^j_{i_2})+\frac{1}{n^{2d}}\sum_{M_{2d}(n)}\prod_{j=1}^d k^j(x^j_{i_{2j-1}},x^j_{i_{2j}})\\
&-\frac{2}{n^{d+1}}\sum_{M_{d+1}(n)}\prod_{j=1}^dk^j(x^j_{i_1},x^j_{i_{j+1}}).
\end{align*}
Here $M_q(n)=\{1,\dots,n\}^q$. Note that this equals the RKHS distance between the mean embedding of the empirical distribution and of the product distribution. A final important property is that,
\begin{align*}
\widehat{ \text{dHSIC}}(x_1,\dots,x_n) \to \text{dHSIC}(X^1,...,X^d)
\end{align*} 
as $n\to \infty$ in probability \cite{pfister2018kernel}.
\subsection{HSIC}
In the case where $d=2$, dHSIC coincides with HSIC, which is defined as 
\begin{definition}
The Hilbert--Schmidt independence criterion (HSIC) of random variables $X^1\in \mathcal X^1$ and $X^2\in \mathcal X^2$ is defined as
\begin{align*}
\hsic(X^1,X^2)\coloneqq \vert \vert \mu_{\mathbb P_{X^1X^2}}-\mu_{\mathbb P_{X^1}\times \mathbb P_{X^2}} \vert \vert^2_{H_k}
\end{align*}
where $\mathbb P_{X^1}\times \mathbb P_{X^2}$ denotes the product measure of $\mathbb P_{X^1}$ and $\mathbb P_{X^2}$.
\end{definition}
\noindent
This was proposed in \cite{kernelteststatisticalindependence}. In \cite{equivalencedistancerkhs} it was shown to be equivalent, under a certain choice of kernel, to a statistic earlier proposed by \cite{distancecovariance}, called distance-covariance. We will mainly prove statements for dHSIC, which will then carry over to HSIC. One thing to note is that to ensure that $\hsic(X^1,X^2)=0$ if and only if $X^1 \independent X^2$, it suffices for $k^1$ and $k^2$ to be $c_0$-universal, but this is not required. In \cite{gretton2015simpler} it is shown that it suffices for $k^1$ and $k^2$ each to be characteristic, for example, which is implied by each being $c_0$-universal. 

\section{Two permutation tests}
\subsection{Notation} \label{sec:notation}
We follow the notation of \cite{pfister2018kernel} for the permutation test of dHSIC. Define maps 
$\psi^i:\{1,\dots,n\}\to \{1,\dots,n \}$ for $i=1,\dots,d-1$, and ${\psi=(\psi^1,\dots,\psi^{d-1})}$.
Then $\psi$ maps $\mathcal X^n$ to itself by
\begin{align*}
\psi D\coloneqq \psi(x_1,\dots,x_n)\coloneqq \left(x^{\psi}_{n,1},\dots,x^{\psi}_{n,n} \right) ,
\end{align*}
where
\begin{align*}
x^{\psi}_{n,i}\coloneqq \left( x^1_{i},x^2_{\psi^1(i)},\dots,x^d_{\psi^{d-1}(i)}\right).
\end{align*}
For our purposes $\psi^i$ will all be permutations of $\{1,\dots,n\}$. Note that we keep the first coordinate fixed, and permute the remaining $d-1$ coordinates. Hence there are $(n!)^{d-1}$ different vectors of such permutations in total.\\

Permutation tests compare $\widehat{ \text{dHSIC}}(D)$ with the statistic recomputed on permuted datasets, i.e. with $\widehat{ \text{dHSIC}}(\psi_{i}D)$ for $i=2,\dots,B+1$ for some $B$ (the indexing becomes apparent in the next section). In particular, if we arrange the $B+1$ elements (the original statistic and the $B$ `permuted' statistics) as a vector, we study the rank of the original statistic, where the rank of the largest element is taken to
be 1. When there are ties, for simplicity we consider two ways of dealing with it. 

\subsubsection{Breaking ties at random}

Say a vector $v\in \mathbb R^n$ has $k$ repeated elements that all have the value $a$. Furthermore let there be $s$ elements strictly smaller than $a$ and $l$ elements strictly larger than $a$, so that $s+k+l=n$. When we say we break the ties at random to compute the rank of each element, we mean that the rank of an element with value $a$ is distributed uniformly on $s+1,s+2,...,s+k$. 

\subsubsection{Breaking ties conservatively}

Breaking ties at random may not always be desirable and one may also break ties conservatively. Say we have permutation vectors $\psi_i$ for $i=2,\dots,B+1$. Then we can also define $R$ as
\begin{align*}
R=1+\sum_{i=2}^{B+1} 1\{\widehat{ \text{dHSIC}}(\psi_{i}D) \geq \hsic(D) \}.
\end{align*}
\textit{When we do not mention otherwise, we will break permutations conservatively.} In practice it seems plausible that observing ties in statistics is rare when the random variables involved are continuous. 

\subsection{Defining two permutation tests}

We are now ready to define two testing procedures, a permutation test enumerating all permutations and a permutation test sampling a fixed number of independent random permutations, uniformly from the symmetric group. 

\begin{definition}(Permutation test dHSIC enumerating all permutations) Let ${\psi_i=(\psi_i^1\dots,\psi_i^{d-1})}$ for $i=2,\dots,(n!)^{d-1}$ be all vectors of permutations such that at least one of the entries of $\psi_i$ is not the identity permutation. Let $\psi_1\coloneqq(\id,\dots,\id)$. Then let $R$ be the rank of the first entry of the vector
\begin{align*}
\left(\widehat{ \text{dHSIC}}(\psi_1D),\dots,\widehat{ \text{dHSIC}}(\psi_{(n!)^{d-1}}D) \right)
\end{align*}
when breaking ties at random. Reject if $ p_D \coloneqq  R/(n!)^{d-1} \leq \alpha$. The quantity $p_D$ denotes the $p$-value of the permutation test enumerating all permutations and we call $\alpha$ the level of the test.
\end{definition}
Again we can also break ties conservatively. If the test breaking ties at random has correct type 1 error rate, so will the test that breaks ties conservatively. 
\begin{definition}(Permutation test dHSIC sampling $B\times (d-1)$ permutations) Let $\psi_i=(\psi_i^1,\dots,\psi_i^{d-1})$ for $i=2,\dots,B+1$ be i.i.d. vectors of $d-1$ permutations sampled i.i.d. uniformly from $S_n$. Let $\psi_1\coloneqq(\id,\dots,\id)$. Then let $R$ be the rank of the first entry of the vector
\begin{align*}
\left(\widehat{ \text{dHSIC}}(\psi_1D),\dots,\widehat{ \text{dHSIC}}(\psi_{B+1}D) \right)
\end{align*}
when breaking ties at random. Reject if $\hat p \coloneqq R/(B+1)\leq \alpha $. The quantity $\hat p$ denotes the $p$-value of the permutation test enumerating a finite sample of permutations and we call $\alpha$ the level of the test.
\end{definition}
We have the following equality for the $p$-value of the permutation test enumerating all permutations, when we break ties at random,
\begin{align*}
p_D=\mathbb P\left(\widehat{ \text{dHSIC}}(\psi D) \geq \widehat{ \text{dHSIC}}(D)\, \bigm| \, D\right )
\end{align*}
where $\psi$ is a random vector of $d-1$ permutations, each of which is chosen uniformly and independently from the permutation
group $S_n$.

The finite-sample permutation test (breaking the ties conservatively) has $p$-value
\begin{align*}
\hat p =\frac{1}{B+1}+\frac1{B+1}\sum_{i=2}^{B+1} 1\{\widehat{ \text{dHSIC}}(\psi_i D ) \geq \widehat{ \text{dHSIC}}(D)\}.
\end{align*}
It is clear that, for each fixed dataset $D$, it holds that $\hat p$ converges to $p_D$ almost surely as $B\to \infty$. In fact, given $D$, it holds that
\begin{align*}
\hat p=\frac{1}{B+1}+\frac{1}{B+1}Z
\end{align*}
where
\begin{align*}
Z|D\sim \text{Binom}(B, p_D).
\end{align*}

\section{Relevant work on consistency and type 1 error rate of permutation tests}\label{sec:relevant_literature}
Correct type 1 error rate of permutation tests, independent of the test statistic used, has been known for a long time: see, for example, \cite{hoeffding1952large}, for the test using all permutations. That the type 1 error rate is correct has also been proved in the context of dHSIC by \cite{pfister2018kernel}, although 
with the additional restriction for the randomly sampled permutation test that the data come from a continuous distribution. 
Although this is not a difficult step, our framework also allows for non-continuous data. We follow the proofs by \cite{berrettinformation} 
that appeared in the context of independence testing via mutual information, presenting these proofs in more detail in section \ref{sec:level}.\\

The issue of consistency of $\text{dHSIC}$ in combination with a permutation test was raised in \cite{pfister2018kernel}. 
That work proves consistency of the bootstrap test, and suggests (in remark 3.2) that consistency of the permutation test could 
be proved following the approach by which \cite{romano1989bootstrap} demonstrated consistency of permutation and bootstrap for 
a wide class of statistics. Specialising the more general work of \cite{romano1989bootstrap} to our setting, let $\tau: P \to P^1 \times \cdots \times P^d$ be the map sending a probability distribution to the product of its marginals. Then \cite{romano1989bootstrap} discusses permutation tests for statistics of the form
\begin{align*}
T_n=\sqrt n \delta_{\mathcal V}(\hat P_n,\tau \hat P_n)
\end{align*}
where 
\begin{align*}
\delta_{\mathcal V}(P,Q)=\sup\{|P(V)-Q(V)|:V\in \mathcal V \}
\end{align*}
for a suitably chosen collection of events $\mathcal V$. As noted in Remark 3.2 of \cite{pfister2018kernel} the statistic $T_n$ resembles dHSIC, but dHSIC is a supremum over functions in an RKHS, rather than a supremum over indicator functions.\\

Generalising from indicators to more general classes of functions is a well worn path, but we observe that the result --- consistency
of the permutation test --- may be derived by a simpler argument, one that was developed already by Hoeffding in the 1950s \cite{hoeffding1952large}. 
Recall that the test using all permutations tells us to reject the null hypothesis if $p_D=R/(n!)^{d-1}\leq \alpha$. Let 
\begin{align*}
t^{1}_n(D) \geq t^2_n(D)\geq  \dots \geq t^{(n!)^{d-1}}_n(D)
\end{align*}
be the ordered permuted statistics and let $t_n(D)\coloneqq \text{dHSIC}(D)$. Finally, let $a = \lfloor \alpha (n!)^{d-1}\rfloor $, 
and note that $a/(n!)^{d-1}\to \alpha$. We reject $H_0$ in particular when $t_n(D)>t^a(D)$. This yields the following lower bound on the power:
\begin{align*}
\mathbb P(p_D\leq \alpha )\geq \mathbb P(t_n(D)>t^a_n(D)).
\end{align*}
Then \cite{hoeffding1952large} poses two conditions for a test: \\
\textbf{Condition A:} There exists a constant $\lambda$ such that $t^a_n(D)\to \lambda$ in probability. \\
\textbf{Condition B}: There exists a function $H(y)$, continuous at $y=\lambda$, such that for every $y$ at which $H(y)$ is continuous it holds that
\begin{align*}
\mathbb P(t_n(D)\leq t)\to H(y).
\end{align*}
If the distribution of $D$ and the test statistic satisfy these conditions, then it is easy to see that
\begin{align*}
\lim_{n\to \infty} \mathbb P(p_D\leq \alpha )&\geq \lim_{n\to \infty} \mathbb P(t_n(D)>t^a_n(D)) \\& =1-H(\lambda).
\end{align*}
As a result, if we were to show that for any distribution of $X$ in which the null hypothesis is false these two conditions are met for $\text{dHSIC}$,
and that in addition  $H(\lambda)=0$, we may conclude that the test is consistent. This can be done in a rather simple way: first in Section \ref{sec:consistency} we prove that for
$\psi=(\psi^1,\dots,\psi^{d-1})$ a vector of i.i.d. random permutations of $\{1,\dots,n\}$, sampled uniformly from the symmetric group, it holds that
\begin{align*}
\widehat{\text{dHSIC}}(\psi D)\to 0
\end{align*}
in probability. It is easy to see that this implies $t^a_n(D) \to 0$ in probability, proving condition A with $\lambda=0$. Finally, when we use a universal kernel the statistic $t_n(D)=\widehat{\text{dHSIC}}(D) \to \text{dHSIC}(P)>0$ in probability. So ${H(y)=1\{\text{dHSIC}(P)\leq y \}}$ satisfies condition B with $H(\lambda)=H(0)=0$. This is shown in more detail in Section \ref{sec:consistency}.

\section{Correct type 1 error rate of permutation tests \label{sec:level}}
\begin{definition}(Correct type 1 error rate) Let $\phi(X_1\dots,X_n)\in \{0,1\}$ be a hypothesis test of level $\alpha$ returning $1$ if it rejects $H_0$, and $0$ otherwise. The test $\phi$ of level $\alpha$ is said to have correct type 1 error rate if for any distribution $\mathbb P_X \in H_0$, i.e., such that $\mathbb P_X=\mathbb P_{X^1}\times \dots \times \mathbb P_{X^d}$, it holds that
\begin{align*} \mathbb P_{X}(\phi(X_1,\dots,X_n)=1)\leq \alpha.
\end{align*}	
\end{definition}

In our case, to show the two defined permutation tests have correct type 1 error rate, it suffices to show that for any $\mathbb P_X\in H_0$, it holds that $\mathbb P_{X}(p_D\leq \alpha)\leq \alpha$ and $\mathbb P_{X}(\hat p \leq \alpha)\leq \alpha$. Note that in the first probability the random element is $D$, the dataset, and in the second the random elements are both $D$ and the random permutations.\\

We now prove the two permutation tests indeed have correct type 1 error rate. This was done also in \cite{pfister2018kernel}, but for the randomly sampled permutation test the assumption was made the data was continuous. We follow the approach of \cite{berrettinformation} (Lemma 1, Section 3 of \cite{berrettinformation}) that proved correct type 1 error rate of mutual independence testing. By breaking ties at random or conservatively, we do not need to assume the data is continuous.
\begin{theorem}(Correct type 1 error rate when enumerating all permutations) Assume $H_0$ is true, i.e. the $x_1,\dots,x_n \in \mathcal X$ are sampled $i.i.d$ from a distribution $\mathbb P_{X^1}\times \cdots \times \mathbb P_{X^d}$. Then the permutation test enumerating all permutations with level $\alpha$ rejects with probability at most $\alpha$. 
\end{theorem}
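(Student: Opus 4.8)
The plan is to prove the statement by the classical randomization (exchangeability) argument that underlies all permutation tests, specialised to the group of coordinatewise permutations and presented in the style of \cite{berrettinformation}. Write $T \coloneqq \widehat{\text{dHSIC}}$ for brevity, and let $G$ be the set of all vectors of permutations $\psi = (\psi^1,\dots,\psi^{d-1})$ with $\psi^j \in S_n$. Under the composition induced by the action $\psi D$ of Section \ref{sec:notation}, $G$ is a finite group of transformations of $\mathcal X^n$ with $|G| = (n!)^{d-1} =: M$. The argument has three ingredients: an invariance statement that uses $H_0$, a purely combinatorial statement about ranks, and a transfer between the two.

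First I would establish the invariance. Under $H_0$ the sample $D = (x_1,\dots,x_n)$ consists of $n$ i.i.d.\ draws from $\mathbb P_{X^1}\times\cdots\times\mathbb P_{X^d}$, so the $nd$ scalar entries $\{x_i^j\}$ are mutually independent, with $x_1^j,\dots,x_n^j$ i.i.d.\ from $\mathbb P_{X^j}$ for each fixed $j$. A fixed $\psi\in G$ only reindexes, within each coordinate $j\ge 2$, an i.i.d.\ block by the permutation $\psi^{j-1}$, and leaves the first coordinate untouched; since permuting an i.i.d.\ block preserves its joint law and the coordinates are independent, we get
\begin{align*}
\psi D \stackrel{d}{=} D \qquad \text{for every fixed } \psi\in G.
\end{align*}
Mixing over a uniformly random $\Pi\in G$ drawn independently of $D$ then gives $\Pi D \stackrel{d}{=} D$. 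I expect this to be the one place where the full product structure of $H_0$ (joint, not merely pairwise, independence) is genuinely used.

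Next I would record the combinatorial fact. Fix a realisation of $D$ and consider the multiset of orbit values $\{T(\psi D):\psi\in G\}$, which has $M$ elements. If $\Pi$ is uniform on $G$, then the rank of $T(\Pi D)$ within this multiset---computed with ties broken at random using independent auxiliary randomness---is uniform on $\{1,\dots,M\}$, because a uniformly chosen position in a list of $M$ values has uniform rank once ties are broken uniformly (a tie group of size $k$ is hit with probability $k/M$ and then assigned each of its $k$ rank slots with probability $1/k$). The bridge is that the orbit multiset is itself $\Pi$-invariant: since right-translation by $\Pi$ is a bijection of $G$, one has $\{T(\psi(\Pi D)):\psi\in G\} = \{T(\psi D):\psi\in G\}$, so the rank of $T(\Pi D)$ inside the orbit of $\Pi D$ equals its rank inside the orbit of $D$. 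Combining this with the invariance of step one, the rank $R$ of the designated statistic $T(\psi_1 D)=T(D)$, where $\psi_1=\id$, satisfies
\begin{align*}
\bigl(\text{rank of } T(D)\bigr) \stackrel{d}{=} \bigl(\text{rank of } T(\Pi D)\bigr),
\end{align*}
and the right-hand side is uniform on $\{1,\dots,M\}$.

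Finally I would conclude. With $R$ uniform on $\{1,\dots,M\}$ and $p_D = R/M$,
\begin{align*}
\mathbb P(p_D\le\alpha) = \mathbb P(R \le \alpha M) = \frac{\lfloor \alpha M\rfloor}{M} \le \alpha,
\end{align*}
which is the claim under random tie-breaking; the conservative $p$-value only inflates $R$, so its rejection probability is no larger and the bound persists. The main obstacle is not any single hard estimate but the bookkeeping of the middle two steps: verifying that $\psi D$ genuinely defines a group action (the composition is slightly twisted, since permutations compose within coordinates) and handling ties cleanly so that $R$ is \emph{exactly} uniform rather than merely stochastically dominated. This is precisely the role of the random tie-breaking, and it is what lets the same argument cover non-continuous data without assuming a density.
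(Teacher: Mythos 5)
Your proof is correct, and it delivers precisely what the paper's proof delivers: under random tie-breaking the rank $R$ of $\widehat{\text{dHSIC}}(D)$ among the $M=(n!)^{d-1}$ permuted statistics is \emph{exactly} uniform on $\{1,\dots,M\}$, whence $\mathbb P(p_D\le\alpha)=\lfloor\alpha M\rfloor/M\le\alpha$, with conservative tie-breaking only inflating $R$ and hence preserving the bound. The two arguments share their first pillar --- under $H_0$ the product structure gives $\psi D\stackrel{d}{=}D$ for each fixed $\psi$, the only place the null hypothesis is used --- but organize the middle, combinatorial step differently. The paper fixes $\psi_1=(\id,\dots,\id)$, places the remaining $M-1$ permutation vectors in uniformly random order, and argues that the resulting vector $\left(\psi_1D,\dots,\psi_{M}D\right)$ is exchangeable, so that every entry, in particular the first, has uniform rank. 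You keep the enumeration fixed and instead run Hoeffding's group-theoretic version: $\Pi D\stackrel{d}{=}D$ for $\Pi$ uniform on the group $G$, invariance of the orbit multiset under right translation (the bijection $\psi\mapsto\psi\Pi$ of $G$), and the observation that a uniformly selected element of a list of $M$ values has uniform rank once ties are broken uniformly. What your route buys: the paper's exchangeability claim is dispatched with ``it is not hard to see,'' and an honest verification of the \emph{joint} (not merely entrywise) equality in distribution essentially requires your right-translation bijection, so your write-up makes explicit the step the paper leaves informal; it also isolates cleanly why no continuity or density assumption is needed and why the twisted composition law of the action is harmless. What the paper's route buys: by randomizing the labels of the non-identity permutation vectors it obtains exchangeability of the whole vector in one stroke, so it never needs to introduce the group structure at all, and the same statement immediately applies verbatim to the finite-sample test (Theorem 2), where the sampled $\psi_2,\dots,\psi_{B+1}$ are i.i.d.\ and the orbit picture is no longer available.
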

\begin{proof}
View the dataset $D=(x_1,\dots,x_n)$ as a random vector in $\mathcal X^n$. Let $\psi_1=(\id,\dots,\id)$. Then under the null hypothesis $\psi_jD\stackrel{d}{=}D=\psi_1D$ for all $j$. Now there are $(n!)^{d-1}-1$ vectors of permutations whose components are not all the identity permutation. Let $\psi_2,\dots,\psi_{(n!)^{d-1}}$ be those vectors in random order, such that each ordering is equally likely.

We claim that in this case the vector 

\begin{align*}
\left(\psi_1D,\dots,\psi_{(n!)^{d-1}}D\right)
\end{align*} 
is exchangeable. That is, we claim  
\begin{align*}
& \left(\psi_1D,\dots,\psi_{(n!)^{d-1}}D\right) \\ \stackrel{d}{=}  & \left(\psi_{\sigma(1)}D,\dots,\psi_{\sigma((n!)^{d-1})}D\right)
\end{align*}
for any permutation $\sigma$ of $\{1,\dots,(n!)^{d-1}\}$. Indeed, by the remark above, the first entries of the two vectors are equal in distribution. It is not hard to see that all the remaining entries are the $(n!)^{d-1}-1$ permutation vectors whose components do not all equal the identity permutation, and each order is equally likely. Consequently

\begin{align*}
\left(\widehat{ \text{dHSIC}}(\psi_1D),\dots,\widehat{ \text{dHSIC}}(\psi_{(n!)^{d-1}}D) \right)
\end{align*}
is exchangeable too. Breaking ties at random, each entry is equally likely to have any given rank, and in particular the rank of the first (and every other) entry is uniformly distributed on $\{1,\dots,(n!)^{d-1}\}$. 
\end{proof}
\begin{theorem}(Correct type 1 error rate when using a finite sample of permutations) Assume $H_0$ is true, i.e. the $x_1,\dots,x_n \in \mathcal X$ are sampled $i.i.d$ from a distribution $\mathbb P_{X^1}\times \cdots \times \mathbb P_{X^d}$. Then the permutation test sampling a finite number of permutations with level $\alpha$ rejects with probability at most $\alpha$. 
\end{theorem}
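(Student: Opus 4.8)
The plan is to reduce the claim, exactly as in the proof of Theorem 1, to an exchangeability statement and then to close with the same rank argument. Write $T_i \coloneqq \widehat{\text{dHSIC}}(\psi_i D)$, where $\psi_1 = (\id,\dots,\id)$ and $\psi_2,\dots,\psi_{B+1}$ are the i.i.d. uniform vectors of permutations. First I would show that under $H_0$ the finite vector $(T_1,\dots,T_{B+1})$ is exchangeable. Once that is in hand, breaking ties at random makes the rank of the first entry uniform on $\{1,\dots,B+1\}$, exactly as in Theorem 1, so that
\begin{align*}
\mathbb P(\hat p \leq \alpha) = \mathbb P\!\left(R \leq \lfloor \alpha(B+1)\rfloor\right) = \frac{\lfloor \alpha(B+1)\rfloor}{B+1} \leq \alpha.
\end{align*}
The conservative version then follows for free: its rank count $R$ is pointwise at least the randomized one, so its $\hat p$ is pointwise larger and its rejection event is contained in the randomized test's, giving a rejection probability no larger than $\alpha$. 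This is the same mechanism as in Lemma 1 of \cite{berrettinformation}.

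The real work — and the point that distinguishes this theorem from Theorem 1 — is the exchangeability of $(T_1,\dots,T_{B+1})$, the obstacle being that slot $1$ is the fixed identity while the remaining slots are i.i.d. uniform draws that may coincide with one another or with the identity. My plan is to prove the stronger statement that the vector of datasets $(\psi_1 D,\dots,\psi_{B+1}D)$ has the same law as $(\Gamma_1 D,\dots,\Gamma_{B+1}D)$, where $\Gamma_1,\dots,\Gamma_{B+1}$ are \emph{all} i.i.d. uniform on the group $G = (S_n)^{d-1}$ of permutation vectors acting on $\mathcal X^n$. The latter vector is trivially exchangeable, since conditionally on $D$ its entries are i.i.d.; hence so is the former, and applying the single deterministic map $\widehat{\text{dHSIC}}$ coordinatewise preserves exchangeability.

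To identify the two laws I would use a change of variables over $G$ together with the only distributional input available under $H_0$, namely $gD \stackrel{d}{=} D$ for every fixed $g \in G$ (independently permuting the entries of the columns of an array of i.i.d. product-distributed rows leaves the law unchanged). Concretely, for bounded $f$,
\begin{align*}
\mathbb E\, f(\Gamma_1 D,\dots,\Gamma_{B+1}D) = \frac{1}{|G|^{B+1}}\sum_{g_1,\dots,g_{B+1}} \mathbb E_D\, f(g_1 D,\dots,g_{B+1}D),
\end{align*}
and in each summand I replace $D$ by $g_1^{-1}D$ (legitimate since $g_1^{-1}D \stackrel{d}{=} D$), which by the left-action identity $g_i(g_1^{-1}D) = (g_i g_1^{-1})D$ turns the term into $\mathbb E_D\, f(D,(g_2 g_1^{-1})D,\dots,(g_{B+1}g_1^{-1})D)$. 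For each fixed $g_1$ the map $g_i \mapsto g_i g_1^{-1}$ is a bijection of $G$, so the sum over $g_2,\dots,g_{B+1}$ rewrites as a sum over $h_2,\dots,h_{B+1}\in G$ whose summand no longer depends on $g_1$; summing out $g_1$ contributes a factor $|G|$ and yields exactly $\mathbb E\, f(\psi_1 D,\dots,\psi_{B+1}D)$ with $\psi_1=(\id,\dots,\id)$. The care points here are that the reindexing uses right multiplication correctly in the non-abelian group $G$, and that permitting repeats among the sampled permutations is harmless — indeed it is precisely the i.i.d. (rather than distinct-enumeration) structure that makes the identification with $(\Gamma_i D)_i$ exact.

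I expect the main obstacle to be purely this bookkeeping step: verifying that inserting the fixed identity into slot $1$ does not break exchangeability, which I would resolve by the group-reindexing argument above rather than by trying to swap slot $1$ with an arbitrary slot directly. Everything else is routine: the passage from exchangeability of the datasets to exchangeability of the real-valued statistics is immediate because $\widehat{\text{dHSIC}}$ is a fixed function applied to each coordinate, and the final rank computation is identical to that of Theorem 1.
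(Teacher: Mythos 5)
Your proof is correct and takes essentially the same route as the paper: the paper's own proof simply notes that $\left(\psi_1 D,\dots,\psi_{B+1}D\right)$ is exchangeable under $H_0$ and then repeats the rank-uniformity argument of Theorem 1, which is exactly your skeleton. Your group-reindexing step supplies the details of the exchangeability claim that the paper leaves implicit (note only that the paper's action composes as $\psi(\phi D)=(\phi\circ\psi)D$, so the reindexing is by left rather than right translation --- a bijection of $G$ either way, so the argument is unaffected).
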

\begin{proof}
The proof is nearly identical to the proof above, except that now we notice that
\begin{align*}
\left(\psi_1D,\dots,\psi_{B+1}D\right)
\end{align*}
is an exchangeable vector.
\end{proof}
Note that these proofs do not use any property of dHSIC. The same proofs would work for any function of the dataset.

\section{Consistency of permutation test with test statistic dHSIC \label{sec:consistency}}
As in the previous section let $\phi(X_1,\dots,X_n)$ equal 1 if the null hypothesis is rejected and 0 otherwise. \begin{definition}(Consistency) The test $\phi$ is called consistent if for every distribution $\mathbb P_{X}\in H_1$ such that $\mathbb P_{X^1,...,X^d}\not =\mathbb P_{X^1}\times \cdots \mathbb \times \mathbb P_{X^d}$, it holds that\\
 
\begin{align*}
\text{(Consistency:)} \qquad \lim_{n\to \infty} \mathbb P_{X}(\phi(X_1,\dots,X_n)=1)=1.
\end{align*}
\end{definition}
That is, a test is consistent if for every fixed alternative hypothesis, the rejection rate converges to $1$ as the sample size grows to infinity. To prove this is the case for our proposed tests we make one assumption, that is satisfied by the Gaussian kernel and for any bounded kernel.\\

\textbf{Assumption 1:} Assume Setting $1$ with $n\geq 2d$, then we furthermore assume that for every $(i_1,\dots,i_{2d})\in\{1,\dots,n \}^{2d}= M_{2d}(n)$\begin{align*}
\mathbb E \bigg\lvert \prod_{j=1}^d k^j(X^j_{i_{2j-1}}X^j_{i_{2j}}) \bigg \rvert<C
\end{align*}
for the same constant $C>0$. 

We begin by proving the empirical dHSIC of a randomly permuted sample converges to zero in probability.
\begin{theorem}\label{thm:permutedhsic}
Let $\psi=(\psi^1,\dots,\psi^{d-1})$ be a vector of $i.i.d.$ random permutations. Then 
\begin{align*}
    \widehat{ \text{dHSIC}}(\psi D)\to 0
\end{align*}
in probability. 
\end{theorem}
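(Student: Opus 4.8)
The plan is to exploit the fact that $\widehat{\text{dHSIC}}$ is a squared RKHS norm, hence nonnegative, so that convergence in probability to $0$ follows from convergence of its expectation to $0$ by Markov's inequality. Writing $\hat\mu_{\text{joint}}^{\psi}$ and $\hat\mu_{\text{prod}}^{\psi}$ for the empirical mean embeddings of the joint and product distributions computed on the permuted sample $\psi D$, one has $\widehat{\text{dHSIC}}(\psi D)=\lVert \hat\mu_{\text{joint}}^{\psi}-\hat\mu_{\text{prod}}^{\psi}\rVert^2_{\mathcal H_k}\ge 0$, and the three summands in the definition are exactly $\lVert\hat\mu_{\text{joint}}^{\psi}\rVert^2$, $\lVert\hat\mu_{\text{prod}}^{\psi}\rVert^2$ and $-2\langle\hat\mu_{\text{joint}}^{\psi},\hat\mu_{\text{prod}}^{\psi}\rangle$. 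A first key observation is that $\hat\mu_{\text{prod}}^{\psi}=\hat\mu_{\text{prod}}$ is invariant under $\psi$, because permuting indices within a coordinate leaves each marginal empirical measure $\frac1n\sum_i k^j(x^j_i,\cdot)$ unchanged. Thus it suffices to show $\mathbb E[\widehat{\text{dHSIC}}(\psi D)]\to 0$, where the expectation is over both the sample $D$ and the independent uniform permutations $\psi^1,\dots,\psi^{d-1}$.

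First I would compute the expectation of each of the three terms by integrating over $\psi$ first, conditionally on $D$. Because $\psi^1,\dots,\psi^{d-1}$ are mutually independent, the expectation of the product $\prod_j k^j(\cdot,\cdot)$ factorizes across coordinates, and for each coordinate $j\ge 2$ the permuted pair of indices is, up to a vanishing diagonal contribution, a uniformly random ordered pair of distinct indices. Consequently each term reduces to a product over $j$ of per-coordinate averages: the off-diagonal average $A^j_n=\frac{1}{n(n-1)}\sum_{a\ne b}k^j(x^j_a,x^j_b)$, the full average $S^j_n=\frac{1}{n^2}\sum_{a,b}k^j(x^j_a,x^j_b)$, and the diagonal average $B^j_n=\frac1n\sum_a k^j(x^j_a,x^j_a)$. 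Carrying this out gives, up to $O(1/n)$ diagonal corrections, $\mathbb E_\psi[\lVert\hat\mu_{\text{joint}}^{\psi}\rVert^2\mid D]=\frac{n-1}{n}\prod_j A^j_n+\frac1n\prod_j B^j_n$, and $\mathbb E_\psi[\langle\hat\mu_{\text{joint}}^{\psi},\hat\mu_{\text{prod}}\rangle\mid D]=\prod_j S^j_n=\lVert\hat\mu_{\text{prod}}\rVert^2$.

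Substituting these into the definition, the two cross-term copies cancel the $\prod_j S^j_n$ coming from $\lVert\hat\mu_{\text{prod}}\rVert^2$, leaving
\begin{align*}
\mathbb E_\psi\big[\widehat{\text{dHSIC}}(\psi D)\mid D\big]=\frac{n-1}{n}\prod_{j=1}^d A^j_n+\frac1n\prod_{j=1}^d B^j_n-\prod_{j=1}^d S^j_n.
\end{align*}
The main step is then to show this tends to $0$ in expectation. Taking $\mathbb E_D$ and using that, for index tuples whose $2d$ indices are all distinct, independence of the data across coordinates gives $\mathbb E[\prod_j k^j]=\prod_j\mathbb E[k^j(X^j,\tilde X^j)]$, both $\mathbb E_D[\prod_j A^j_n]$ and $\mathbb E_D[\prod_j S^j_n]$ converge to the same limit $\prod_j \mathbb E[k^j(X^j,\tilde X^j)]$, so their difference vanishes, while $\frac1n\prod_j B^j_n$ is $O(1/n)$. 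Here Assumption 1 does the essential work: it bounds $\mathbb E|\prod_j k^j|$ uniformly over all index tuples, so that the contribution of the $O(n^{2d-1})$ tuples with a repeated index is $O(1/n)$, and it likewise controls the diagonal terms.

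I expect the main obstacle to be the combinatorial bookkeeping: correctly evaluating the expectation over $\psi$ of each term (in particular the cross term, where the shared ``joint'' index $i_1$ is mapped to independent random positions $\psi^{j-1}(i_1)$ in the distinct coordinates), and then carefully separating the leading ``all indices distinct'' contribution from the lower-order collision and diagonal terms, which is precisely where Assumption 1 is invoked. Once $\mathbb E[\widehat{\text{dHSIC}}(\psi D)]\to 0$ is established, nonnegativity and Markov's inequality give $\widehat{\text{dHSIC}}(\psi D)\to 0$ in probability, completing the proof.
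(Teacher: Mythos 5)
Your proposal is correct, and it shares the paper's overall skeleton: since $\widehat{\text{dHSIC}}\ge 0$, it suffices to show $\mathbb E[\widehat{\text{dHSIC}}(\psi D)]\to 0$, and the limit is controlled by the observation that index tuples with all $2d$ entries distinct dominate, where independence makes expectations factorize, with Assumption 1 bounding the collision terms. Where you genuinely differ is the order of conditioning and the resulting cancellation structure. The paper conditions on $\psi$ and integrates over $D$: it splits each of the three sums $A_n$, $B_n$, $C_n$ according to whether the permuted indices are distinct, shows each expectation separately converges to the common value $\zeta=\prod_{j}\mathbb E\bigl[k^j(X^j,\tilde X^j)\bigr]$, and concludes $\zeta+\zeta-2\zeta=0$; the cancellation happens only in the limit. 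You instead condition on $D$ and integrate over $\psi$ first, exploiting independence of the $\psi^j$ and the invariance of the product-marginal embedding to get an exact finite-$n$ formula
\begin{align*}
\mathbb E_\psi\bigl[\widehat{\text{dHSIC}}(\psi D)\mid D\bigr]=\tfrac{n-1}{n}\prod_{j=1}^d A^j_n+\tfrac1n\prod_{j=1}^d B^j_n-\prod_{j=1}^d S^j_n,
\end{align*}
in which the cross term already cancels one copy of $\prod_j S^j_n$ exactly, before any limit is taken; only then do you invoke the distinct-indices argument, to show $\mathbb E_D\bigl[\prod_j A^j_n\bigr]$ and $\mathbb E_D\bigl[\prod_j S^j_n\bigr]$ have the same limit $\zeta$ while the diagonal term is $O(1/n)$. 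Your route costs a little more combinatorial care when averaging over $\psi$ (especially in the cross term, where $\psi^j(i_1)$ are independent across coordinates), but it buys a cleaner intermediate object: a closed-form conditional expectation that makes transparent exactly which quantities must converge to a common limit, rather than three separate limits that happen to cancel. Both proofs invoke Assumption 1 at the same point and for the same purpose.
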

\begin{proof}
We note that since $\widehat{\text{dHSIC}}$ is nonnegative it suffices to show that 
\begin{align*}
\lim_{n\to\infty}\mathbb E \bigl[ \widehat{ \text{dHSIC}}(\psi D) \bigr]=0.
\end{align*}
For the context of this proof only, when $\psi=(\psi^1,\psi^2,\dots,\psi^d)$ is a $d$-tuple of permutations on $n$ symbols
with $\psi^1=\id$ we redefine $\psi D$ to be $\psi' D$ as defined in section  \ref{sec:notation}, where $\psi'=(\psi^2,\dots,\psi^d)$. With this notation the 
permuted $\widehat{ \text{dHSIC}}$ statistic may be written
\begin{align*}
\widehat{ \text{dHSIC}}(\psi D) = & \frac{1}{n^2} \sum_{M_2(n)} \prod_{j=1}^dk^j(X_{\psi^j(i_1)}^j,X^j_{\psi^j(i_2)})+\frac{1}{n^{2d}}\sum_{M_{2d}(n)}\prod_{j=1}^dk^j(X^j_{\psi^j(i_{2j-1})},X^j_{\psi^j(i_{2j})})\\
&-\frac{2}{n^{d+1}}\sum_{M_{d+1}(n)}\prod_{j=1}^dk^j(X^j_{\psi^j(i_1)},X^j_{\psi^j(i_{j+1})}),
\end{align*}
which we abbreviate as $A_n+B_n-2C_n$. We now aim to show that $\lim_n\mathbb EA_n=\lim_n\mathbb EB_n=\lim_n\mathbb EC_n=\zeta$ where
\begin{align*}
\zeta=\prod_{j=1}^d \mathbb E\left(k^j(X^j,\tilde X^j) \right),
\end{align*}
where $X^j$ and $\tilde X^j$ are independent copies of the same random variable. The main observation to make in this proof is that, as the sample size grows to infinity, almost all of the terms in the sum will have $2d$ distinct indices. More specifically, write
\begin{align*}
A_n =&\frac{1}{n^2} \sum_{U(2,\psi,n)}\prod_{j=1}^dk^j(X_{\psi^j(i_1)}^j,X^j_{\psi^j(i_2)}) \\ & +\frac{1}{n^2} \sum_{R(2,\psi,n)}\prod_{j=1}^dk^j(X_{\psi^j(i_1)}^j,X^j_{\psi^j(i_2)}) 
\end{align*}
where 
\begin{align*}
U(2,\psi,n)\coloneqq&\Big\{ (i_1,i_2)\in M_2(n):{}\\
&\qquad (\psi^j(i_1),\psi^j(i_2) : j=1,\dots,d) \text{ are } 2\times d \text{ distinct elements} \Big\},
\end{align*}
\begin{align*}
R(2,\psi,n)\coloneqq M_2(n)\setminus U(2,\psi,n).
\end{align*}
Conditioning on $\psi$ and using the tower property we find
\begin{align*}
\mathbb E\left( A_n \right) &= \mathbb E\left( \mathbb E\left( A_n \vert \psi\right) \right) \\
&= \mathbb E\left( \frac{1}{n^2} \sum_{U(2,\psi,n)}\mathbb E\left(\prod_{j=1}^dk^j(X_{\psi^j(i_1)}^j,X^j_{\psi^j(i_2)})\Bigg\vert \psi \right) \right)\\ &\quad+  \mathbb E\left( \frac{1}{n^2} \sum_{R(2,\psi,n)} \mathbb E\left( \prod_{j=1}^dk^j(X_{\psi^j(i_1)}^j,X^j_{\psi^j(i_2)}) \Bigg\vert \psi \right)\right)\\
&= \mathbb E\left( \frac{\lvert U(2,\psi,n) \rvert}{n^2} \right) \prod_{j=1}^d \mathbb E\left(k^j(X^j,\tilde X^j) \right) \\ &\quad + \mathbb E\left( \frac{\lvert R(2,\psi,n) \rvert}{n^2} \right) \mathcal O(1) \\ &\to \zeta.
\end{align*}
Note that in the last equality, we use that in the first sum all indices in the product are distinct and the expectation factorizes, and in the second sum the estimate $\mathcal O(1)$ follows from Assumption 1, where we assumed all expectations of the given form are bounded by some constant. The limit then follows from the fact that \begin{align*}
\mathbb E \vert U(2,\psi,n)\vert/n^2 &= \frac{ n(n-1)}{n^2}\cdot\\ &\qquad \mathbb P\left((\psi^j(1),\psi^j(2) : j=1,\dots,d) \text{ are } 2\times d \text{ distinct elements}  \right) \\ &=
\frac{n(n-1)}{n^2}\frac{ {n -2 \choose 2} {n -4 \choose 2}  \cdots {n -2d+2 \choose 2}  }{{n  \choose 2} {n  \choose 2} \cdots {n  \choose 2} } \\ & \to 1.
\end{align*}
As a result 
\begin{align*}
\lim_{n\to\infty}\mathbb E(A_n) =\prod_{j=1}^d \mathbb E\left(k^j(X^j,\tilde X^j) \right) = \zeta.
\end{align*}
This argument can be repeated for $B_n$ and $C_n$. Specifically, observe $B_n$ and $C_n$ are sums over indices in $M_{2d}(n)$ and $M_{d+1}(n)$ respectively, where the summands take the form $\prod_{j=1}^dk^j(X^j_{i_j}, X^j_{\tilde i_j})$ for some multi-indices $I\coloneqq (i_1,\tilde i_1,\dots,i_d,\tilde i_{d})$. We then split these 
into sums over $I$ consisting of $2d$ distinct integers, and those over multi-indices with repeated components. 
Lastly, we remark that the multi-indices $I$ result from randomly permuting the data, and consequently the probability these $2d$ numbers are all distinct converges to 1
as the sample size goes to infinity. Conditioned on the event that indeed all $2d$ integers are distinct the expectation of the product is $\zeta$.
\end{proof}

We now prove that $t^a_n(D)$ (see Section \ref{sec:relevant_literature}) converges to zero in probability.   
\begin{theorem}(Convergence of $t_n^a(D)$)\label{thm:convergencequantile}
Let $\mathbb P_X$ be any distribution such that $\mathbb P_{X}\neq \mathbb P_{X^1}\times \dots \times \mathbb P_{X^d}$. Perform a permutation test on $D=(X_1,\dots,X_n)$ using all $(n!)^{d-1}$ permutation vectors. Let $t^1_n(D)\geq \dots \geq t^{(n!)^{d-1}}$ be the values of $\text{dHSIC}$ computed on all permutations of the data. Let $a=\lfloor \alpha (n!)^{d-1}\rfloor$ for $\alpha \in (0,1)$. Then $t^a_n(D)\to 0$ in probability. 
\end{theorem}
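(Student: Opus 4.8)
The plan is to recognise $t^a_n(D)$ as a high empirical quantile of the collection of permuted statistics and to transfer the convergence already established in Theorem \ref{thm:permutedhsic} to this quantile by an elementary counting argument, followed by one application of Markov's inequality. Throughout, fix $\epsilon > 0$ and let $\psi = (\psi^1,\dots,\psi^{d-1})$ be a vector of i.i.d.\ uniform permutations, so that, conditionally on $D$, the value $\widehat{\text{dHSIC}}(\psi D)$ is uniformly distributed over the multiset $\{t^1_n(D),\dots,t^{(n!)^{d-1}}_n(D)\}$, each element receiving mass $1/(n!)^{d-1}$.

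First I would relate the event $\{t^a_n(D) > \epsilon\}$ to a count. Writing $N_\epsilon(D) \coloneqq |\{i : t^i_n(D) > \epsilon\}|$ for the number of permuted statistics exceeding $\epsilon$, the ordering $t^1_n(D) \geq \cdots \geq t^{(n!)^{d-1}}_n(D)$ gives the exact identity $\{t^a_n(D) > \epsilon\} = \{N_\epsilon(D) \geq a\}$: the values exceeding $\epsilon$ are precisely the largest ones, so they occupy ranks $1,\dots,N_\epsilon(D)$, and the $a$-th largest exceeds $\epsilon$ if and only if $a \leq N_\epsilon(D)$. This identity holds regardless of ties, because the same strict threshold $\epsilon$ is used throughout. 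Dividing by $(n!)^{d-1}$ and using the conditional uniformity of $\widehat{\text{dHSIC}}(\psi D)$, the normalized count is exactly a conditional tail probability,
\begin{align*}
\frac{N_\epsilon(D)}{(n!)^{d-1}} = \mathbb P\bigl(\widehat{\text{dHSIC}}(\psi D) > \epsilon \bigm| D\bigr).
\end{align*}

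Next I would invoke Theorem \ref{thm:permutedhsic}, which gives $\widehat{\text{dHSIC}}(\psi D) \to 0$ in probability jointly over $D$ and $\psi$, hence $\mathbb P(\widehat{\text{dHSIC}}(\psi D) > \epsilon) \to 0$. Taking expectations in the display above and using the tower property,
\begin{align*}
\mathbb E\left[\frac{N_\epsilon(D)}{(n!)^{d-1}}\right] = \mathbb P\bigl(\widehat{\text{dHSIC}}(\psi D) > \epsilon\bigr) \to 0.
\end{align*}
Since $a/(n!)^{d-1} \to \alpha > 0$, for all large $n$ we have $a/(n!)^{d-1} \geq \alpha/2$, and Markov's inequality applied to the nonnegative random variable $N_\epsilon(D)/(n!)^{d-1}$ yields
\begin{align*}
\mathbb P\bigl(t^a_n(D) > \epsilon\bigr) = \mathbb P\left(\frac{N_\epsilon(D)}{(n!)^{d-1}} \geq \frac{a}{(n!)^{d-1}}\right) \leq \frac{2}{\alpha}\, \mathbb E\left[\frac{N_\epsilon(D)}{(n!)^{d-1}}\right] \to 0.
\end{align*}
As $\epsilon > 0$ was arbitrary and $t^a_n(D) \geq 0$, this is exactly $t^a_n(D) \to 0$ in probability.

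The argument is short precisely because the hard analytic work---controlling the mean of the permuted statistic through the distinct-index combinatorics---has already been carried out in Theorem \ref{thm:permutedhsic}. The only genuinely delicate point is the interchange between the joint ``in probability'' statement over $(D,\psi)$ and the conditional empirical distribution of the permuted statistics given $D$; I would phrase the conditional-uniformity identity carefully, as it is exactly what converts Theorem \ref{thm:permutedhsic} into a statement about the quantile $t^a_n(D)$. I would also flag explicitly that the strict positivity of the limit $\alpha$ is what makes the Markov step effective, since it lets the threshold $a/(n!)^{d-1}$ be bounded below by a fixed constant for large $n$.
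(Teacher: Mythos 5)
Your proposal is correct and follows essentially the same route as the paper: both reduce the event $\{t^a_n(D) \geq \epsilon\}$ to the statement that the conditional probability $\mathbb P(\widehat{\text{dHSIC}}(\psi D)\geq \epsilon \mid D)$ is at least $a/(n!)^{d-1}$, then apply Markov's inequality to that conditional probability and conclude via the tower property and Theorem \ref{thm:permutedhsic}. Your explicit counting identity via $N_\epsilon(D)$ just spells out the step the paper states as an implication, and your lower bound $a/(n!)^{d-1}\geq \alpha/2$ is a cosmetic variant of the paper's observation that the denominator converges to $\alpha$.
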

\begin{proof}
Let $\psi$ be a permutation vector consisting of $d$ i.i.d uniformly chosen permutations. Note that $t^a_n(D)\geq\epsilon$ implies that \begin{align*}
\mathbb P( \text{dHSIC}(\psi D) \geq \epsilon \, | \, D ) \geq a/(n!)^{d-1} .
\end{align*}
Consequently, for $\epsilon>0$, using Markov's inequality in the second estimate,
\begin{align*}
\lim_{n\to \infty}\mathbb P(t_n^a(D) \geq \epsilon )  &\leq \lim_{n\to \infty} \mathbb P \left[ \mathbb P( \text{dHSIC}(\psi D) \geq \epsilon \, \bigm| \, D ) \geq a/(n!)^{d-1} \right] \\ 
& \leq \lim_{n \to \infty} \frac{\mathbb E\left[\mathbb P( \text{dHSIC}(\psi D) \geq \epsilon \, \bigm| \, D )\right]}{a/(n!)^{d-1}}\\ 
& = \lim_{n \to \infty} \frac{ \mathbb P(\text{dHSIC}(\psi D) \geq \epsilon )}{ a/(n!)^{d-1}} \\ 
&=0.
\end{align*}
In the last line we use the fact that denominator converges to $\alpha$, and that the numerator converges to 0 by Theorem \ref{thm:permutedhsic}. \end{proof}

We are now ready to prove the test using all permutations is consistent. In fact we prove that $p_D \to 0$ in probability. 

\begin{theorem}
(Convergence of $p_D$, consistency of permutation test using all permutations)\label{thm:consistencyallpermutations}
Let $\mathbb P_X$ be any distribution such that $\mathbb P_{X}\neq \mathbb P_{X^1}\times \dots \times \mathbb P_{X^d}$. Perform a permutation test on $D=(X_1,\dots,X_n)$ using $(n!)^{d-1}$ permutation vectors with test statistic $\text{dHSIC}$ using a characteristic kernel $k$. Let $p_D$ be the resulting $p$-value. Then
\begin{align}
    p_D\to 0
\end{align}
in probability and, in particular, the test is consistent.  
\end{theorem}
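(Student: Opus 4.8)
The plan is to prove the stronger claim that $p_D \to 0$ in probability; consistency is then immediate, since for any level $\alpha \in (0,1)$ the event $\{p_D \le \alpha\}$ has probability tending to $1$, so the rejection rate converges to $1$. I will work from the identity
\begin{align*}
p_D = \mathbb{P}\left(\widehat{\text{dHSIC}}(\psi D) \geq \widehat{\text{dHSIC}}(D) \,\bigm|\, D\right)
\end{align*}
established earlier for the test enumerating all permutations, where $\psi$ is a vector of independent permutations chosen uniformly from $S_n$.

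First I would fix the population constant $c \coloneqq \text{dHSIC}(P)$. Because $k$ is characteristic and we are under the alternative $\mathbb{P}_X \neq \mathbb{P}_{X^1} \times \cdots \times \mathbb{P}_{X^d}$, we have $c > 0$. The convergence $\widehat{\text{dHSIC}}(D) \to c$ in probability (recalled in Section \ref{sec:introduction}) then shows the unpermuted statistic is bounded away from $0$ with overwhelming probability; in particular $\mathbb{P}(\widehat{\text{dHSIC}}(D) < c/2) \to 0$.

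Next, fix $\epsilon > 0$ and condition on the event $A_n \coloneqq \{\widehat{\text{dHSIC}}(D) \geq c/2\}$. On $A_n$ the threshold appearing in the conditional probability defining $p_D$ is at least $c/2$, so there $p_D \leq \mathbb{P}(\widehat{\text{dHSIC}}(\psi D) \geq c/2 \mid D)$. Bounding $p_D \le 1$ off $A_n$ gives
\begin{align*}
\mathbb{P}(p_D > \epsilon) \leq \mathbb{P}(A_n^c) + \mathbb{P}\left(\mathbb{P}(\widehat{\text{dHSIC}}(\psi D) \geq c/2 \mid D) > \epsilon\right).
\end{align*}
The first term vanishes by the previous step. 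For the second I would apply Markov's inequality to the random variable $\mathbb{P}(\widehat{\text{dHSIC}}(\psi D) \geq c/2 \mid D)$, exactly as in the proof of Theorem \ref{thm:convergencequantile}, bounding it by $\mathbb{P}(\widehat{\text{dHSIC}}(\psi D) \geq c/2)/\epsilon$ via the tower property, and then invoke Theorem \ref{thm:permutedhsic} to conclude this tends to $0$. Sending $n \to \infty$ and then letting $\epsilon \downarrow 0$ yields $p_D \to 0$ in probability.

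I expect the proof to be essentially assembly: the two substantive inputs --- that the permuted statistic collapses to $0$ (Theorem \ref{thm:permutedhsic}) and that the unpermuted statistic converges to the positive population value --- are already in hand. The only point needing care is treating the conditional probability $\mathbb{P}(\cdot \mid D)$ as a random variable and applying Markov to it; isolating the event $A_n$ so that the \emph{fixed} threshold $c/2$, rather than the random $\widehat{\text{dHSIC}}(D)$, sits inside the conditional probability is precisely what makes Theorem \ref{thm:permutedhsic} directly applicable.
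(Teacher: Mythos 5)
Your proof is correct, and it takes a somewhat different route from the paper's. The paper factors the argument through the order statistic $t_n^a(D)$ (the $\lfloor \alpha (n!)^{d-1}\rfloor$-th largest permuted statistic value): Theorem \ref{thm:convergencequantile} first shows $t_n^a(D)\to 0$ in probability via Markov's inequality and Theorem \ref{thm:permutedhsic}, and consistency then follows from $\mathbb P\bigl(t_n(D)>t_n^a(D)\bigr)\to 1$, since $t_n(D)=\widehat{\text{dHSIC}}(D)\to \text{dHSIC}(P)>0$ for a characteristic kernel. You bypass the quantile entirely: starting from the representation $p_D=\mathbb P\bigl(\widehat{\text{dHSIC}}(\psi D)\geq \widehat{\text{dHSIC}}(D)\mid D\bigr)$, you freeze the random threshold at the deterministic level $c/2$ on the high-probability event $A_n=\{\widehat{\text{dHSIC}}(D)\geq c/2\}$, and then apply the same Markov/tower argument directly to the conditional probability. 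The substantive inputs are identical in both proofs (Theorem \ref{thm:permutedhsic} plus convergence of the unpermuted statistic to a positive constant), and your Markov step is precisely the one inside the proof of Theorem \ref{thm:convergencequantile}, with the fixed threshold $c/2$ in place of an arbitrary $\epsilon$. What your route buys: it is self-contained (no need for the ordered permuted statistics or Theorem \ref{thm:convergencequantile}) and it gives an explicit quantitative bound, $\mathbb P(p_D>\epsilon)\leq \mathbb P\bigl(\widehat{\text{dHSIC}}(D)<c/2\bigr)+\epsilon^{-1}\,\mathbb P\bigl(\widehat{\text{dHSIC}}(\psi D)\geq c/2\bigr)$, in which both terms vanish. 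What the paper's route buys: by isolating $t_n^a(D)\to 0$ as a separate statement it exhibits the result as an instance of Hoeffding's classical Conditions A and B recalled in Section \ref{sec:relevant_literature}, with $\lambda=0$ and $H(y)=1\{\text{dHSIC}(P)\leq y\}$, which is the framing the paper advertises. Two minor remarks: once you have shown $\mathbb P(p_D>\epsilon)\to 0$ for each fixed $\epsilon>0$ you are done, so the final ``letting $\epsilon\downarrow 0$'' is superfluous; and your argument, like the paper's, proves the claim for the conservative tie-breaking $p$-value, which dominates the randomized one, so both variants of the test are covered.
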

\begin{proof}
Note that if $t_n(D)>t^a_n(D)$, then we reject the null hypothesis. Thus for every $\alpha>0$,
\begin{align*}
         \lim _{n\to \infty} \mathbb P(p_D\leq \alpha) &\leq \lim_{n\to \infty} \mathbb P( t_n(D) > t^a_n(D)) \\ & = 1.
\end{align*}
Where we use $t_n(D)=\widehat{\text{dHSIC}}(D) \to \text{dHSIC}(P)>0$ in probability for a characteristic kernel and that by Theorem \ref{thm:convergencequantile}, it holds that $t_n^a(D)\to 0$ in probability. 
\end{proof}
Finally, it is now easy to see that the finite sample permutation test is consistent too.

\begin{theorem}(Consistency using a finite sample of permutations)\label{thm:consistencyfinitenumber}
Let $\mathbb P_X$ be any distribution such that $\mathbb P_{X}\neq \mathbb P_{X^1}\times \dots \times \mathbb P_{X^d}$. Perform a permutation test on a sample of size $n$ using $B$ random permutation vectors $\psi_1,\dots,\psi_B$ of length $d-1$, where $B\geq  \frac{1}{\alpha}-1$ for $\alpha \in (0,1)$,
and suppose the kernel $k$ on $\mathcal X \times \mathcal X$ is characteristic. Then 
\begin{align*}
\lim_{n\to \infty}\mathbb P(\hat p \leq \alpha )=1.
\end{align*}
\end{theorem}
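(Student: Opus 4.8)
The plan is to exhibit a sufficient event for the rejection $\{\hat p \leq \alpha\}$ whose probability tends to $1$ under the alternative, and the key observation is that, because $B$ is fixed, only finitely many permuted statistics need be controlled. Recall that, breaking ties conservatively,
\[
\hat p = \frac{1}{B+1} + \frac{1}{B+1}\sum_{i=2}^{B+1} 1\{\widehat{ \text{dHSIC}}(\psi_i D)\geq \widehat{ \text{dHSIC}}(D)\},
\]
and that the hypothesis $B\geq \tfrac1\alpha - 1$ is exactly equivalent to $\tfrac{1}{B+1}\leq \alpha$. Hence, if every permuted statistic is strictly smaller than the original, then all the indicators vanish, $R=1$, and $\hat p = \tfrac{1}{B+1}\leq \alpha$. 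The same conclusion holds under random tie-breaking, since a statistic that is strictly the largest is assigned rank $1$ regardless of the tie-breaking convention.

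Next I would fix $\epsilon$ with $0<\epsilon<\text{dHSIC}(P)$, for instance $\epsilon=\text{dHSIC}(P)/2$, and introduce the event
\[
E_n = \{\widehat{ \text{dHSIC}}(D) > \epsilon\} \cap \bigcap_{i=2}^{B+1}\{\widehat{ \text{dHSIC}}(\psi_i D) < \epsilon\}.
\]
On $E_n$ every permuted statistic lies strictly below $\epsilon$ while the original statistic exceeds $\epsilon$, so every permuted statistic is strictly smaller than the original. By the previous paragraph $E_n$ is therefore contained in $\{\hat p \leq \alpha\}$, giving $\mathbb P(\hat p \leq \alpha)\geq \mathbb P(E_n)$.

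Finally I would bound the complement by a finite union bound,
\[
\mathbb P(E_n^c)\leq \mathbb P\bigl(\widehat{ \text{dHSIC}}(D)\leq \epsilon\bigr) + \sum_{i=2}^{B+1}\mathbb P\bigl(\widehat{ \text{dHSIC}}(\psi_i D)\geq \epsilon\bigr).
\]
The first term tends to $0$ because $\widehat{ \text{dHSIC}}(D)\to \text{dHSIC}(P)>\epsilon$ in probability for a characteristic kernel. Each summand tends to $0$ by Theorem \ref{thm:permutedhsic}: each $\psi_i$ with $i\geq 2$ is a vector of i.i.d.\ uniform random permutations, so $\widehat{ \text{dHSIC}}(\psi_i D)\to 0$ in probability. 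Since $B$ is fixed, the sum has finitely many terms and vanishes in the limit, whence $\mathbb P(E_n)\to 1$ and the claim follows.

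I expect the only step requiring genuine care to be the first one, namely translating $\{\hat p \leq \alpha\}$ into a statement about ranks and verifying that the hypothesis $B\geq \tfrac1\alpha-1$ is precisely what makes $R=1$ sufficient for rejection, together with checking that the reduction is insensitive to the tie-breaking rule. The probabilistic content is otherwise light, precisely because $B$ does not grow with $n$: one application of Theorem \ref{thm:permutedhsic} per permutation, combined with the consistency of $\widehat{ \text{dHSIC}}(D)$, suffices, and no bound uniform in $B$ is needed.
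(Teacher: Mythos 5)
Your proof is correct, but it takes a different route from the paper's. The paper derives this theorem as a corollary of Theorem~\ref{thm:consistencyallpermutations}: there $p_D \to 0$ in probability, and since $\hat p = (1+Z)/(B+1)$ with $Z\mid D \sim \mathrm{Binom}(B,p_D)$, conditioning on the event $\{p_D \leq \epsilon_1\}$ gives $\mathbb P(\hat p \leq \alpha) \geq (1-\epsilon_1)^B(1-\epsilon_2)$, which is made arbitrarily close to $1$. You instead bypass Theorems~\ref{thm:convergencequantile} and~\ref{thm:consistencyallpermutations} entirely: you work directly with the $B$ sampled permutation vectors, introduce the threshold $\epsilon = \text{dHSIC}(P)/2$, and apply a finite union bound, invoking Theorem~\ref{thm:permutedhsic} once per permutation vector together with the consistency $\widehat{\text{dHSIC}}(D) \to \text{dHSIC}(P) > 0$. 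Both arguments hinge on the same mechanism --- with probability tending to one, no permuted statistic reaches the original, so $\hat p = 1/(B+1) \leq \alpha$, which is exactly where the hypothesis $B \geq 1/\alpha - 1$ enters --- and indeed the paper's factor $(1-\epsilon_1)^B \geq 1 - B\epsilon_1$ is a union bound in disguise. What your version buys is self-containedness: consistency of the finite-$B$ test needs only Theorem~\ref{thm:permutedhsic}, not the machinery built for the all-permutations test. What the paper's version buys is brevity and reuse: having already established $p_D \to 0$, the binomial representation of $\hat p$ makes the corollary nearly immediate, and it packages the rejection probability into the explicit lower bound $(1-\epsilon_1)^B(1-\epsilon_2)$. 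Your handling of the tie-breaking conventions and of the role of the hypothesis on $B$ is careful and matches the paper's definitions.
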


\begin{proof}
By Theorem \ref{thm:consistencyallpermutations}, $p_D\to 0$ in probability. Recall that $\hat p=(1+Z)/(B+1)$ for $Z\sim \text{Binom}(B,p_D)$ and that if $Z=0$, then $\hat p=1/B+1\leq \alpha$ and the test is rejected. So choose $N_1(\epsilon_1,\epsilon_2)$ so large that for $n\geq N_1$, it holds that $\mathbb P(p_D\leq \epsilon_1 )\geq 1-\epsilon_2$, for some $\epsilon_1,\epsilon_2>0$. Then, for $n\geq N_1$,
\begin{align*}
\mathbb P(\hat p \leq \alpha)&\geq \mathbb P(\hat p =1/(B+1) \, \bigm| \, p_D\leq \epsilon_1)\mathbb P(p_D\leq \epsilon_1) \\
&\geq (1-\epsilon_1)^B(1-\epsilon_2) 
\end{align*}
By choosing $\epsilon_1$ and $\epsilon_2$ small enough this can be made arbitrarily close to 1.
\end{proof}

\section{How many permutations to use? \label{sec:howmany}}
Recall that for the finite sample permutation test we sample $B$ permutation vectors, each consisting of $d-1$-permutations. We previously showed type 1 error is correct for any $B$ and the test is consistent for every $B$ such that $1/(B+1)\leq \alpha$. So, if the tests work well for any such value of $B$, how does one decide which value of $B$ to use? There is no definite answer, but we suggest here some relevant considerations.

\subsection{Rejection probabilities for a fixed dataset}
Each fixed dataset has an associated $p$-value based on enumerating all permutations, given by
\begin{align*}
p_D\coloneqq \mathbb P(\widehat{\text{dHSIC}}(\psi D)\geq \widehat{\text{dHSIC}}(D) \, \bigm| \, D)
\end{align*}
Based on the discussion above, the permutation test enumerating all permutations rejects the null hypothesis 
with probability 1 $H_0$ if $p_D\leq \alpha$ and with probability 0 otherwise. 

On the other hand, the finite-sample permutation test (breaking ties conservatively) rejects the null hypothesis if and only if
\begin{align*}
\hat p =\frac{Z+1}{B+1} \leq \alpha
\end{align*}
where
\begin{align*}
Z|D\sim \text{Binom}(B,p_D).
\end{align*}
For a given dataset and associated value $p_D$, we can compute the probability the finite-sample permutation test rejects the null hypothesis as
\begin{align*}
\mathbb P(\hat p \leq \alpha\vert D)=\mathbb P(Z \leq \alpha(B+1)-1 \, \bigm| \, D).
\end{align*}
This is the probability one rejects $H_0$ for a dataset with a parameter $p_D$. This probability is plotted in Figure \ref{plot:rejection_rate_vs_pd}. \\

The effect of the number of permutations on the probability of rejecting $H_0$ can be understood in three regimes of $p_D$, as is seen in Figure \ref{plot:rejection_rate_vs_pd}.
\begin{enumerate}
\item The data is such that $p_D <\alpha$:
In this case, increasing the number of permutations increases the probability of rejecting the null hypothesis for this dataset.
\item The data is such that $p_D = \alpha$:
In this case the probability of rejecting the null hypothesis is approximately $1/2$, since the mean and median of a binomial distribution are very close. 
\item The data is such that $p_D > \alpha$:
In this case, as the number of permutations increases and $\hat p$ gets closer and closer to $p_D$, we reject the null hypothesis more often. So, using fewer permutations actually raises the probability of rejecting the null hypothesis based on this dataset.\\
\end{enumerate}
In summary, using more permutations increases the probability of rejecting the null hypothesis when your dataset has parameter $p_D\leq \alpha$ and lowers the probability of rejecting datasets with parameter $p_D>\alpha$. We stress that the test has correct type 1 error rate regardless of the number of permutations. 

\begin{figure}[H]
\centering

\begin{subfigure}{0.8\textwidth}
  \centering
  \includegraphics[width=1.1\linewidth]{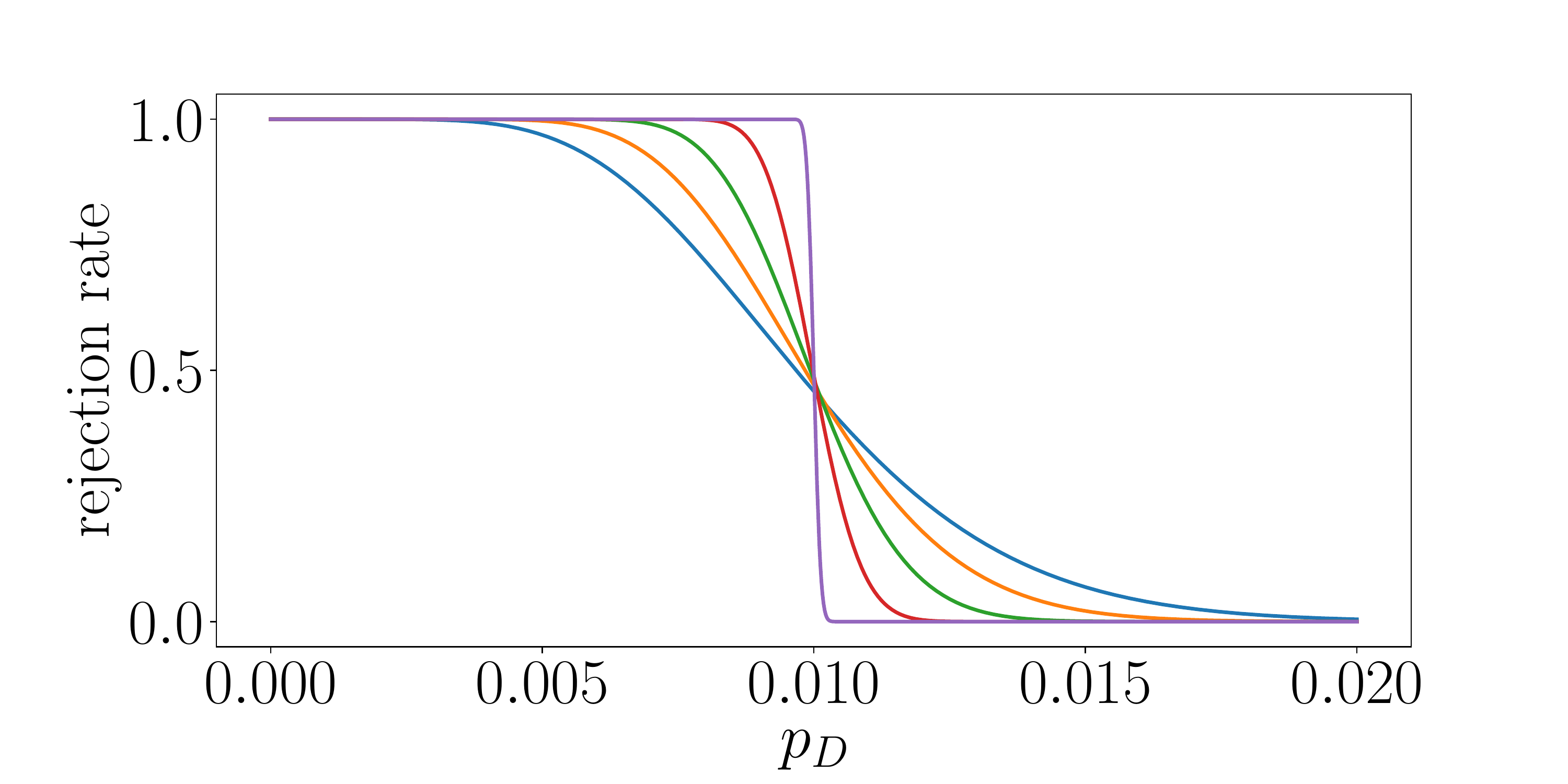}
\end{subfigure}
\begin{subfigure}{0.8\textwidth}
  \centering
  \includegraphics[width=1.1\linewidth]{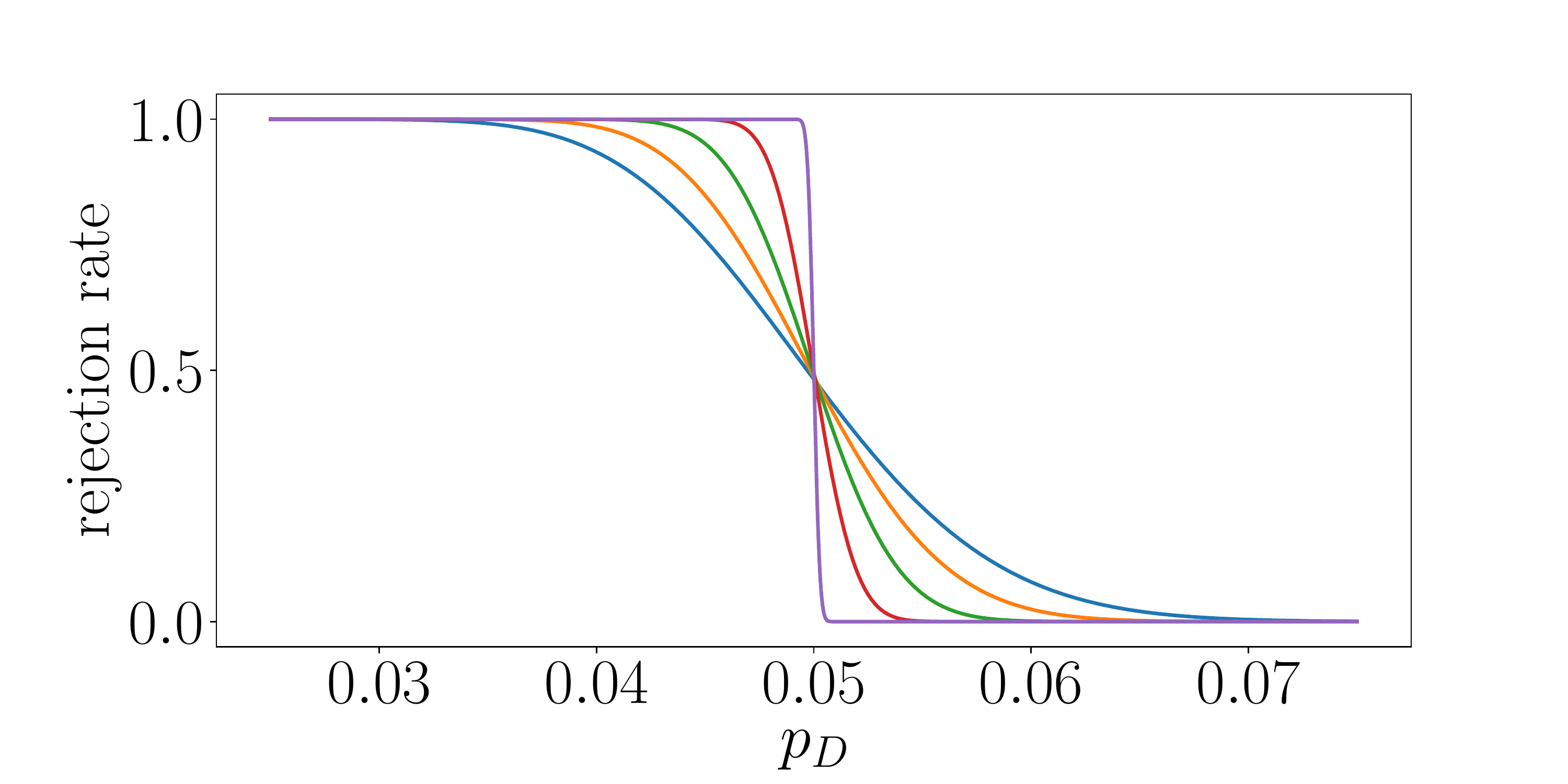}
\end{subfigure}
	
\begin{subfigure}{.9\textwidth}
  \centering
  \includegraphics[width=1.1\linewidth]{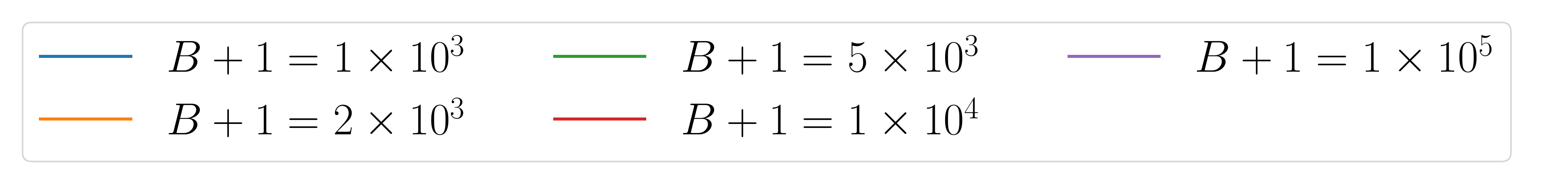}
\end{subfigure}
\caption{\label{plot:rejection_rate_vs_pd} Say you are given (fixed) data $D$ with associated $p_D$. When you enumerate all permutations, you reject if and only if $q\leq \alpha$. These plots plot $\mathbb P(\hat p \leq \alpha \vert p_D)$ for different values of $B$.   }
\end{figure}

\subsection{The effect of the number of permutations on the power}

The parameter $p_D$ itself is a random quantity too (as it is a function of the data). Say for the sake of simplicity it has a density $f_{p_D}(p)$ on $[0,1]$. The total probability of rejecting the null hypothesis when using a permutation test with $B$ permutation vectors is
\begin{align*}
\mathbb P(\hat p\leq \alpha)=\int_{[0,1]}\mathbb P(Z \leq \alpha(B+1)-1|p_D=p)f_{p_D}(p)dp.
\end{align*}
While we plotted the function $\mathbb P(Z \leq \alpha(B+1)-1|p_D=p)$ in Figure \ref{plot:rejection_rate_vs_pd}, the quantity $f_{p_D}(p)$ will depend very much on the data generating mechanism. It will be approximately uniform under the null hypothesis, but analytic descriptions of $f_{p_D}$ are complicated for arbitrary distributions of $X$. We perform two simulation studies to illustrate the relationship between power and $B$.
We study the case where $d=2$.\\

\textbf{Scenario 1:} Let 
\begin{align*}
X^2=\theta X^1+ \epsilon
\end{align*}
where $X^1,\epsilon\sim \mathcal N(0,I_5)$ independently where $I_5$ is the $5-$dimensional identity matrix. When $n=100$ we find that the power is nearly identical for all numbers of permutations and for all values of $\theta$, as shown in Figure \ref{plot:power_vs_theta}. An explanation is that the variance in the underlying $p$-value $p_D$ is large when the sample size is $100$, and as a result $f_{p_D}$ has a wide support, and the integral of the functions plotted in Figure 1 with density $f_{p_D}$ all result in the same value. \\

\textbf{Scenario 2:} The previous scenario showed no difference in power between the methods. The explanation was the variance of $p_D$, or the width of the distribution $f_{p_D}$. It is not easy to find distributions of $X^1$ and $X^2$ such that $f_{p_D}$ has a support only in the region where the curves in Figure 1 are separated (so near $\alpha$) and this support is furthermore not symmetric around $\alpha$. So in Scenario 2, for each value of $\theta$, we choose a fixed (nonrandom) dataset. Namely:
\begin{align*}
X^2=\sin(\theta X^1)
\end{align*}
where $X^1_i=i2\pi/100$ for $i=1,\dots,100$ (so again $n=100$). In this case, as $\theta$ increases, the frequency of the oscillation increases and the sample looks less dependent. Indeed in Figure \ref{plot:power_vs_theta} we see that slow oscillations are more often rejected when using more permutations, and fast oscillations are more often rejected when using fewer permutations. 

\begin{figure}[H]
\centering

\begin{subfigure}{1\textwidth}
  \centering
  \includegraphics[width=0.8\linewidth]{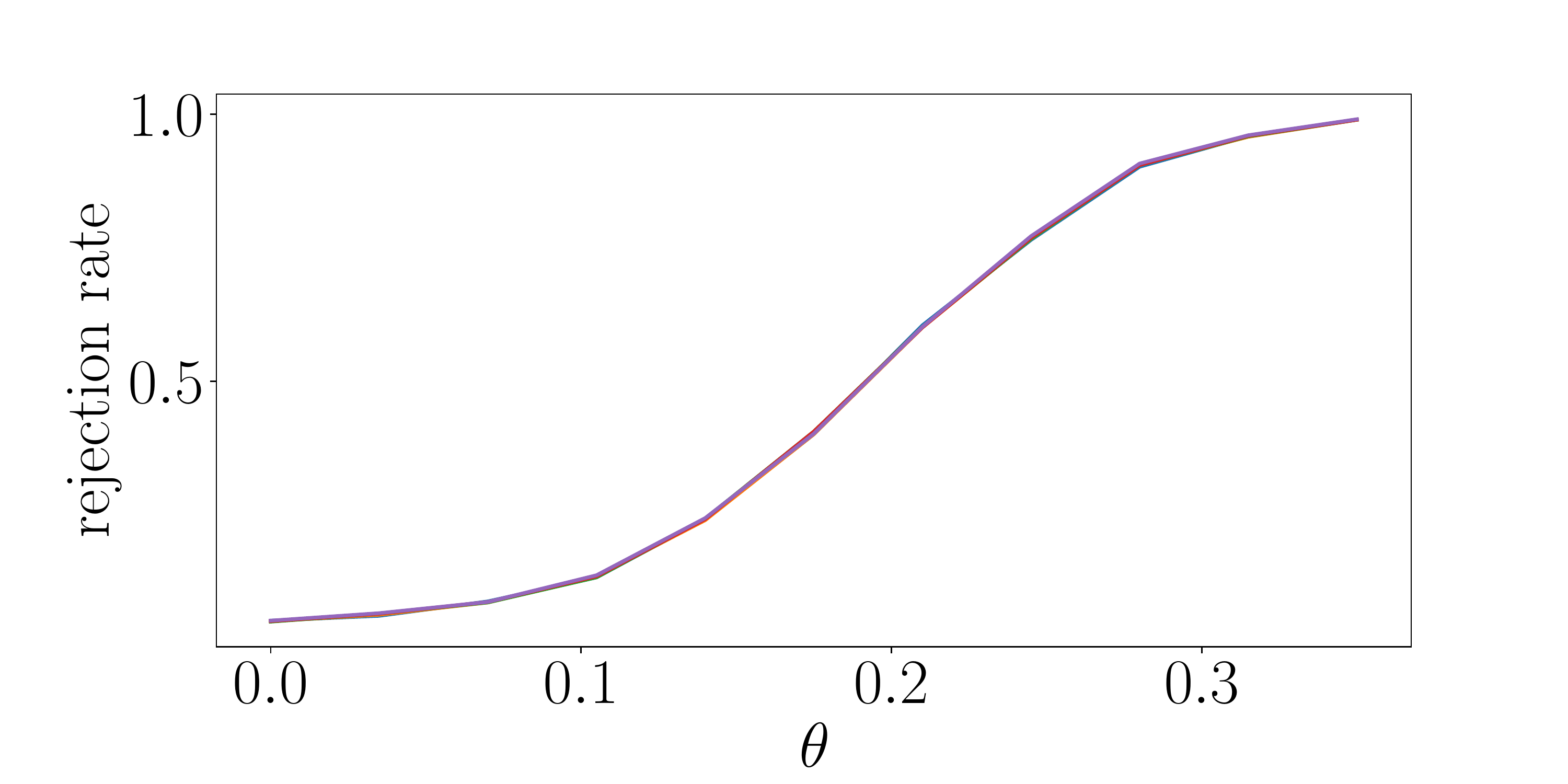}
  \subcaption{The rejection rates of the permutation test with different values of $B$ in Scenario 1. All tests have equal power throughout and all curves overlap.}
\end{subfigure}
\begin{subfigure}{1\textwidth}
  \centering
  \includegraphics[width=0.8\linewidth]{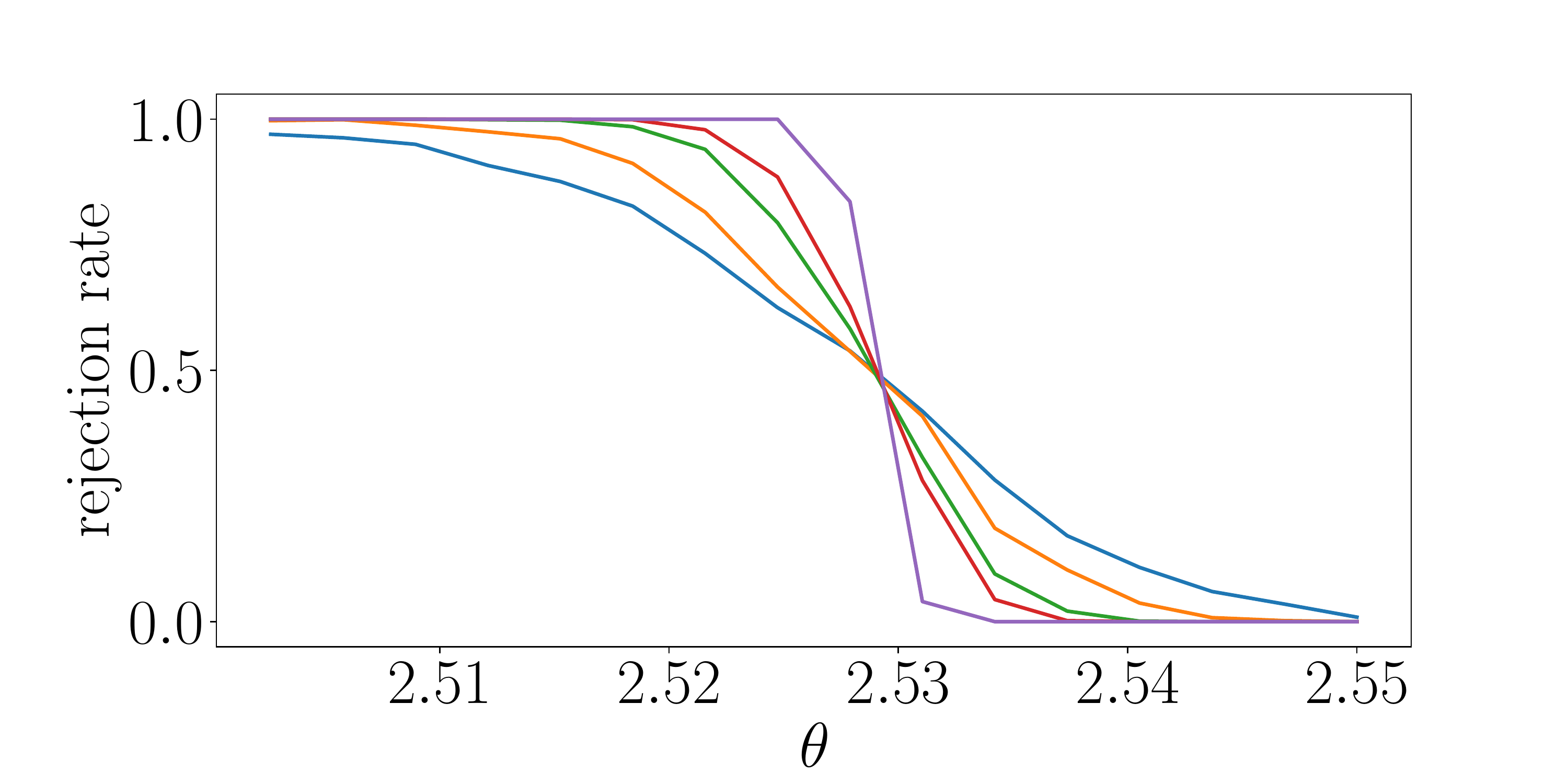}
  \subcaption{The rejection rates of the permutation test with different values of $B$ in Scenario 2. Using more permutations makes one virtually always reject datasets with lower $\theta$ and virtually never with higher $\theta$. Lowering the number of permutation test makes the decision more uncertain.}
\end{subfigure}
\begin{subfigure}{.9\textwidth}
  \centering
  \includegraphics[width=1.1\linewidth]{plots/second_legend.pdf}
\end{subfigure}
\caption{\label{plot:power_vs_theta} The power of the test with different numbers of permutations in Scenario 1 and 2.}
\end{figure}

\subsection{Confidence intervals}

In practice one is not only interested in accepting or rejecting the null hypothesis, but one wants to find a reliable estimate of $p_D$. So we recommend to choose $B$ so large that $\hat p$ is likely to be close to $p_D$.  We recall that, given the dataset,
\begin{align*}
    \hat p =\frac{1}{B+1}+\frac{1}{B+1}Z
\end{align*}
where $Z\sim \text{Binom}(B,p_D)$. Hence accuracy of $\hat p$ can be described simply through confidence intervals of the binomial distribution. That is,  

\begin{align*}
 \mathbb P&\left( \hat p - \epsilon \leq p_D \leq \hat p + \epsilon \right) \\
&=\mathbb P \left( (p_D-\epsilon)(B+1)-1 \leq Z \leq (p_D+\epsilon)(B+1)-1 \right). 
\end{align*}
For a given, $\epsilon, p_D$ and confidence level $1-\lambda$, we can find $B$ such that $\hat p \pm \epsilon $ is a confidence interval of level $1-\lambda$. As we do not know $p_D$, we could choose $B$ so large that for \textit{any} $p_D$, the interval $\hat p \pm \epsilon$ is a $1-\lambda$ confidence interval. \\

However, $B$ will always be highest for $p_D \approx 0.5$ as that value maximizes the variance of the binomial distribution. As the accuracy of the estimate is more important when $p_D$ is close to $\alpha$, we may want to reduce the number of permutation vectors needed by dividing the data in two possible cases:

Case 1: The data is such that $p_D\in [0,C]$ for some $C\in (\alpha,1)$. In this case choose $B$ so large that $\hat p \pm \epsilon$ is a $1-\lambda$ confidence interval for some $\epsilon$. Case 2: The data is such that $p_D\in (C,1]$. In that case, we simply check if the maximum width of the $1-\lambda$ confidence interval matches our desired accuracy - in particular we check if $p_D>C$ implies that $\hat p$ is very unlikely to be near $\alpha$. Using these two cases we we allow for more error when $p_D\geq C$. Say for example $\alpha=0.05$, $C=0.10$, $\epsilon=0.005$, and $1-\lambda=99\%$. Then we need $B=2.3\times 10^4$ permutations to ensure $\hat p\pm \epsilon$ is a $99\%$ confidence interval whenever $p_D\leq C$. If $p_D\geq C$, then the widest width of a $99\%$ confidence interval is $0.01$, so our estimated $p$-value $\hat p$ is still accurate. \\

\section{Conclusion}
We have studied kernel measures of dependence and how they are combined with permutation tests to perform hypothesis testing. Our main contribution is proving the consistency of the permutation test with statistic dHSIC with a universal kernel. This implies in particular consistency of the permutation test with test statistic HSIC. Additionally we show that for each number of permutations and for each number of samples the probability of making a type 1 error is at most $\alpha$. This last statement was a known result, and we proved it following the method used by \cite{berrettinformation} in the context of independence testing by mutual information, extending it to testing mutual independence. We further gave examples of how one may go about choosing a number of permutations in practice.  
\bibliographystyle{plain}
\bibliography{refs}

\end{document}